\documentclass[sigconf]{acmart}


\usepackage{latexsym,amsxtra}
\usepackage{amsfonts}
\usepackage{verbatim}
\usepackage{amsmath}
\usepackage{amssymb}

\usepackage{balance}

\numberwithin{equation}{section}

\clubpenalty=10000
\widowpenalty = 10000


\tolerance=500
\unitlength=1mm

\theoremstyle{plain}
\newtheorem{cor}[theorem]{Corollary}

\theoremstyle{definition}
\newtheorem{defi}[theorem]{Definition}

\newtheorem{rema}[theorem]{Remark}

\newcommand{\KK}{\ensuremath{\mathbf k}}  

\newcommand{\s}{\ensuremath{\mathbb S}}
\newcommand{\Z}{\ensuremath{\mathbb Z}}

\settopmatter{printacmref=false, printfolios=false}
\fancyhead{}

\newcommand\alia{ {\mathcal A}lia } 

\def\dim{\mbox{dim\,}}


\newcommand \f{\mathcal{F}}         

\def\Id{\mbox{Id\,}}



\def\tor{\mbox{Tor\,}}

\def\ass{\: {\mathcal A}ss \:}

\def\p{ {\mathcal P} }

\copyrightyear{2017}
\acmYear{2017}
\setcopyright{acmcopyright}
\acmConference{ISSAC '17}{July 25-28, 2017}{Kaiserslautern, Germany}\acmPrice{15.00}\acmDOI{http://dx.doi.org/10.1145/3087604.3087655}
\acmISBN{978-1-4503-5064-8/17/07}

\begin{document}

\title {Growth in Varieties of Multioperator Algebras 
            \\and Groebner Bases in Operads}

\author{Dmitri Piontkovski}

\authornote{The article was prepared within the framework of the Academic Fund Program at the National Research University Higher School of Economics (HSE) in 2017--2018 (grant 17-01-0006) and by the Russian Academic Excellence Project ``5--100''.}
\orcid{0000-0002-9853-1891}
\affiliation{%
  \institution{Department of Mathematics for Economics, National Research University Higher School of Economics}
  \streetaddress{Myasnitskaya str. 20}
  \city{Moscow} 
   \country{Russia} 
  \postcode{101000}
}
\email{dpiontkovski@hse.ru}

\date{\today}

 \begin{abstract}
We consider varieties of linear multioperator algebras, that is, classes of algebras with several multilinear operations satisfying certain identities. To each such a variety one can assign a numerical sequence called a sequence of codimensions. The $n$-th codimension is equal to the dimension of the vector space of all $n$-linear operations in the free algebra of the variety.  In recent decades, a new approach to such a sequence has appeared based on the fact that the union of the above vector spaces carries the structure of algebraic operad, so that the generating function of the codimension sequence is equal to the generating series of the operad. 

We show that in general there does not exist an algorithm to decide whether  the growth exponent of the codimension sequence  of the variety defined by given finite sets of operations and identities is equal to a given rational number. In particular, we solve negatively a recent conjecture by Bremner and Dotsenko by showing that the set of codimension sequences of varieties defined by a bounded number and degrees of operations and identities is infinite. Then we discuss algorithms which in many cases calculate the generating functions of the codimension series in the form of a defining algebraic or differential equation. For a more general class of varieties, these algorithms give upper and lower bounds for the codimensions  in terms of generating functions.  The upper bound is just a formal power series satisfying an algebraic equation defined effectively by the generators and the identities of the variety.  The first stage of an algorithm for the lower bound  is the construction of a Groebner basis of the operad. If the Groebner basis happens to be finite and satisfies mild restrictions, a recent theorem by the author and Anton Khoroshkin guarantees that the desired generating function is either algebraic or differential algebraic.   We describe algorithms producing such equations. In the case of infinite Groebner basis, these algorithms applied to its finite subsets give lower bounds for the generating function of the codimension sequence. 
 \end{abstract}

\begin{CCSXML}
<ccs2012>
<concept>
<concept_id>10010147.10010148.10010149.10010150</concept_id>
<concept_desc>Computing methodologies~Algebraic algorithms</concept_desc>
<concept_significance>500</concept_significance>
</concept>
<concept>
<concept_id>10002950.10003624.10003625.10003629</concept_id>
<concept_desc>Mathematics of computing~Generating functions</concept_desc>
<concept_significance>500</concept_significance>
</concept>
<concept>
<concept_id>10002950.10003705</concept_id>
<concept_desc>Mathematics of computing~Mathematical software</concept_desc>
<concept_significance>300</concept_significance>
</concept>
</ccs2012>
\end{CCSXML}

\ccsdesc[500]{Computing methodologies~Algebraic algorithms}
\ccsdesc[500]{Mathematics of computing~Generating functions}
\ccsdesc[300]{Mathematics of computing~Mathematical software}

\keywords{Operad; codimension series; growth in operads; generating series; Groebner bases in operads; algebraic equation; differential algebraic equation}

\maketitle


\section{Introduction}

We consider classes of linear multioperator algebras defined by operations and identities among them, that is, varieties of multioperator algebras. Given such a variety, a sequence of vector spaces is associated to it called a 
{\it cocharacter sequence} or an {\it algebraic operad} associated to the variety.  The sequence of dimensions 
of these vector spaces ({\it codimension sequence}) and its (exponential) generating function ({\em codimension series} of the variety, or {\em generating series} of the operad) is one of the most important invariant of an operad or a variety. In particular, the asymptotic growth   of the cocharacter sequence is a measure of the growth of the operad. Both the generating series and the asymptotics of the coefficients have been studied for a number of varieties and operads, see~\cite{giza} and~\cite{zin} and references therein.

Here we discuss the algorithmic approach to determining asymptotic growth of  operads. First, we should mention the recent progress in determining such a growth in the case of varieties of algebras with one binary operation~\cite{giza} where the theory of representations of symmetric groups is extremely useful. In particular it is shown~\cite{berele} that the codimension series of each variety of associative algebras is a holonomic function.  In contrast, here we  concentrate on the case of the varieties of multi-operational algebras. In this case, operadic methods are extremely useful. These operadic methods lead to algorithms based on various symbolic computation concepts such as; the  theory of Groebner bases in operads, the combinatorial problems which are close to the problem of enumeration  of labelled trees avoiding certain patterns,  the formal power series solutions of algebraic and algebraic differential equations and the problem of determining  the asymptotics of the coefficients of such solutions.

 We try to explain here a  chain of algorithms which leads from a list of operations and identities to  either asymptotic growth of  corresponding variety or  bounds for  asymptotics.  We discuss  also theoretical obstructions for such algorithms to be applicable in the most general case. We  discuss briefly a recent implementation of some algorithms from the chain (that is,  algorithms for calculation of  Groebner bases in operads). 
 
 More precisely, we first show that, in general, there does not exist an algorithm which always  determines the growth asymptotics of codimensions of a variety defined by finite collection of operations and identities. Next, we give a lower bound for the generating series of the codimension sequence in  cases when the number of  defining identities is bounded in each arity. This lower bound is a formal power series solution of some algebraic equation whose coefficients encode  dimensions of  spaces of  generators and  relations of the operad.  In particular, this gives explicit lower bounds for the codimensions. The bound becomes an equality (that is, it gives a formula for the generating function of the operad) if the operad satisfies a simple homological condition, that is, it has right homological dimension at most two. 
 
  Then, we give  upper bounds for the generating functions of the codimension sequences of arbitrary operads. This upper bound is equal to the generating series of the monomial operad corresponding to any partial Groebner basis of a given operad. Under some mild conditions on  defining monomial relations of the monomial operad, this lower bound is the formal power series solution of a differential algebraic equation or a pure algebraic equation. So, this bound occurs to be an equality if the operad under consideration has finite Groebner basis.  Note that there are  operads for which  both of our bounds become equalities. 
  
  Consider, for example, the variety of alia algebras introduced by Dzhumadildaev, that is, the variety of non-associative algebras that are satisfying the identity
$$
\{[x_1, x_2], x_3\} + \{[x_2, x_3], x_1\} + \{[x_3, x_1], x_2\} = 0,
$$
where $[a,b] = ab-ba$ and $\{a,b\} = ab+ba$, these algebras are also referred to as 1-alia algebras~\cite{dzh}. 
It is shown in~\cite[Example~3.5.1]{kp} that the corresponding  operad $\alia$ has quadratic Groebner basis (in particular, it is Koszul). 
Its generating series $y=\alia(z)$ is equal to the  generating series 
of the corresponding quadratic monomial operad. 
Moreover, the relations of the monomial  operad are symmetric regular (see Section~\ref{sec:we}), so that $y$ satisfies 
an algebraic equation $y-y^2+y^3/6=z$. 
On the other hand, it follows that the generating series of the quadratic dual operad is $\alia^!(z) = z+z^2 +z^3/6$. It follows that the operad $\alia$ has a homological dimension of two, so that our lower bound becomes an equality too. Indeed, Corollary~\ref{cor:GS_quadr} implies that the  lower bound $y=y(z)$ satisfies the same algebraic equation.

The plan of the paper is as follows. In Section~\ref{sec1} we briefly recall what  varieties of algebras and algebraic operads are. In particular, we give a brief `phrase-book' for explaining operads in terms of varieties and vice versa.  In Section~\ref{sec:undec}, we explain a theoretical  obstruction for determining  growth of an operad. First, we show that for sufficiently large $n$ the set of generating functions of quadratic operads generated by $n$ binary operations (=  set of codimension series of varieties of algebras with $n$ binary operations) is infinite. This solves  a conjecture by Bremner and Dotsenko negatively~\cite[Conjecture 10.4.1.1]{db}. Moreover, we show that there does not exist an algorithm which, given a list of generators and relations of a non-symmetric operad  known to have an exponential growth, always decides whether the exponent of the growth is equal to a given rational number. This means that there is no algorithm to decide that, given a set of generators and identities of a variety of multioperator algebras such that its codimensions $c_n$ grow approximately as $ n! c^n $ for some $c\ge 1$,
whether $c$ is equal to a given rational number. So,
 the asymptotic growth of a variety defined by a finite number of identities is not algorithmically recognized in general.

In Section~\ref{sec:est} we give estimates for the dimensions of the components of operads provided that the number of defining identities in each degree is bounded. In this case, the generating series of an operad is bounded below by an algebraic function depending only on  degrees and  arities of  generators and  identities.
 Particularly, we use an operadic version of the Golod--Shafarevich theorem~\cite{kurosh} to establish
the lower and upper  asymptotic bounds of  type $c^n \frac{(2n)!}{n!} $ for quadratic operads generated by at least two non-symmetric operations with bounded number of defined relations in each degree. 

In Section~\ref{sec:groeb} we briefly recall the foundations of the theory of Groebner bases for operads~\cite{dk}. Then we discuss the operads with finite Groebner bases. The generating function of such an operad is equal to the generating function of the corresponding {\em monomial} operad. So, in the next Section~\ref{sec:we} we discuss  finitely presented monomial operads. We recall that under some mild symmetry conditions   the generating function of such operad does satisfy an algebraic differential equation. We also briefly describe an algorithm to generate such an equation based on the results of~\cite{kp}. We focus on the additional conditions that imply that the obtained equation is in fact algebraic or even rational. In the last two cases the asymptotics of the generating function coefficients can be recovered by the standard computer algebra tools.

\section{Operads and varieties}

\label{sec1}

For the details on operads we refer the reader to the monographs~\cite{oper} and~\cite{Loday_Valet}; see also the textbook~\cite{db}. 

\subsection{A definition of operad}

We consider multioperator linear algebras over a field $\KK$ of zero characteristic.
Let $W$ be a variety of $\KK$--linear algebras (without constants,
with identity and without other unary operations)  of some
signature $\Omega$. We assume  $\Omega$ is a finite union of finite sets $\Omega = \Omega_2 \cup \dots \cup \Omega_k$ where the 
elements $\omega$ of  $\Omega_t$ act on each algebra $A \in W$ as $t$-linear operations, $\omega:A^{\otimes t} \to A$. 
Recall that a variety is defined by two sets, the signature $\Omega$ and a set of defining identities $R$. By the linearization process, one can assume that $R$ consists of multilinear identities. 
Consider the free algebra $F^W (x)$ on a
countable set of indeterminates $x= \{ x_1, x_2, \dots \}$. Let
$\p_n \subset F$ be the subspace consisting of all multilinear
generalized homogeneous polynomials on the variables $\{ x_1,
\dots, x_n \}$, that is, $\p_n$ is the component $F^W (x) [1, \dots, 1, 0, 0,\dots]$ with respect to the $\Z^\infty$-grading by the degrees on $x_i$.

\begin{defi}
Given such a variety $W$,  the sequence $\p_W = \p := \{ \p_1,
\p_2, \dots \}$ of the vector subspaces of $F^W (x)$ is called
an {\it operad}\footnote{More precisely, symmetric connected
$\KK$--linear operad with identity.}. 
\end{defi}

The $n$-th component  $\p_n$  may be identified with the set of
all derived $n$-linear operations on the algebras of $W$; in
particular,  $\p_n$ carries a natural structure of a
representation of the symmetric group $S_n$. Such a sequence $Q =
\{ Q(n) \}_{n \in \Z}$ of representations $Q(n)$ of the symmetric
groups $S_n$ is called an {$\s$--module}, so that an operad
carries a structure of $\s$-module with $\p_n = \p(n)$.  Also, the compositions of
operations (that is, a substitution of an argument $x_i$ by a
result of another operation with a subsequent monotone re-numbering the inputs to avoid repetitions) gives natural maps of
$S_*$-modules $\circ_i :  \p(n)\otimes \p(m) \to \p(n+m-1)$. Note
that the axiomatization of these operations gives an abstract
definition of operads, see~\cite{oper} for the discussion.

Note that the signature $\Omega$ can be considered as a sequence of subsets of $\p$ with $\Omega_n \subset \p_n$. 
Then $\Omega$ generates the operad $\p$ up to the $\s$--module structure and the  compositions $\circ_i$ so that it is called a 
{\em set of generators} of the operad.  

More generally, the  $\s$-module $X$ 
generated by $\Omega$ is called the (minimal) {\em module of generators} of the operad $\p$.
It can be also defined independently of $\Omega$ as $X = \p_+/(\p_+\circ \p_+)$ where $\p_+ = p_2\cup p_3 \cup \dots$ and $\circ$
denotes the span of all compositions of two $\s$-modules.
Then one can define a variety $W$ corresponding to a (formal) operad $\p$ by picking a set $\Omega$
of generators of $X$ to be the signature and considering all relations in $\p$ as defining identities of the variety,
so that the variety $W$ can be recovered by $\p$ ``up to a change of variables''. Moreover, one can consider 
the algebras from $W$ as vector spaces $V$ with the actions $ \p(n): V^{\otimes
n} \to V$ compatible with compositions  and the
$\s$-module structures, so that the algebras of $W$ are recovered by $\p$ up to  isomorphisms. 

Given an $\s$-module $X$, one can define also a {\em free operad} $\f(X)$ generated by $X$ as the span of all possible 
compositions of a basis of $X$ modulo the action of symmetric groups. For example, the free operad $\f(\s\Omega)$
on the free $\s$-module $\s\Omega$ corresponds to the variety of all algebras of signature $\Omega$. 

One can define a simpler notion of {\em non-symmetric operad} as a union $P = P_1\cup P_2\cup \dots$
with the compositions $\circ_i$ as above but without actions of the symmetric groups. To distinguish them, we refer to the operads defined above as {\em symmetric}. Each symmetric operad can be considered as a non-symmetric one.
Moreover, to each non-symmetric operad $P$ one can assign a symmetric operad  $\p$ where $\p_n = S_n P_n$ is a free $S_n$ module generated by $P_n$. Then $\p$ is called a  {\em symmetrization}  of $P$. In particular, here $\dim \p_n = n! \dim P_n$.

An $n$-th codimension of a variety $W$ is just the dimension of the
respective operad component: $c_n(W) = \dim_k \p_n$ for $\p = \p_W$.
We consider both exponential and ordinary generating series for this sequence:
\begin{equation}
\label{eq::E::gen::ser}
E_{\p} (z) := \sum_{n \ge 1} \frac{\dim \p(n)}{n!} z^n , G_{\p} (z) := \sum_{n \ge 1} {\dim \p(n)} z^n .
\end{equation}
For example, if $\p$ is  a symmetrization of a non-symmetric operad $P$ then 
$E_{\p} (z) = G_P (z)$. By {\em generating series} of a symmetric operad $\p$ we mean the exponential generating function $\p(z) = E_{\p} (z)$.
In contrast, for a non-symmetric operad $P$ its generating series is defined as the ordinary generating function 
$P(z) =  G_{P} (z)$. In the case of varieties, both the  ordinary and exponential versions of the codimension series are studied. 

If the set $\Omega $ is finite then the series $\p (z)$ defines an analytic function
in a neighborhood of zero. For example, the non-symmetric operad $\mathrm{Ass}$ of associative algebras 
is the operad defined by one binary operation $m$ (multiplication) subject to the relation  $m(m(x_1,x_2),x_3)) = m(x_1,m(x_2,x_3))$ which is the associativity identity. Its $n$-th component consists of the only equivalence class of all arity $n$ compositions of $m$ with itself modulo the relation, so that 
$
\mathrm{Ass}(z) = G_{\mathrm{Ass}} (z) = \frac{z}{1-z}.
$
Its symmetrization is the symmetric operad $\ass$ generated by two operations $m(x_1,x_2)$ and $m'(x_1,x_2)=m(x_2,x_1)$
with the $S_2$ action $(12) m' = m$ 
subject to all the relations of the form $m(m(x_i,x_j),x_k)) = m(x_i,m(x_j,x_k))$. 
By the above, we have $E_{\ass}(z) = \ass (z) = \mathrm{Ass}(z)= G_{\mathrm{Ass}} (z)$, so that $\dim \ass_n = n!$.

For the reader's convenience, let us give an approximate 
translation table in a phrase-book style for the two languages of linear universal algebra. 
Here the objects at the same line are in correspondence up to a choice of  signature
while the last two strings represent equalities. 

 \centerline{
 \begin{tabular}{rcl}
  variety & --- & the category of all algebras \\
            & &      $\qquad$ over an operad \\
  subvariety & --- & quotient operad \\
  signature &  --- & set of generators \\ 
  identities  & --- & relations \\
  free algebra & --- & free operad \\
  free algebra of a variety & --- & operad \\
  $n$-th codimension & = & dimension of the $n$-th component
  \smallskip
  \\
  (ordinary or exponential) & =  &  (ordinary or exponential)  \\
 codimension series   $\phantom{ab}$ & &  $\phantom{abc}$ generating series
   \end{tabular}
  }


\section{General algorithmic undecidability}

\label{sec:undec}

In the next theorem, we assume that the field $\KK$ is computable.

\begin{theorem}
\label{th:undec}
Consider the set of non-symmetric quadratic operads $P$ defined by a fixed finite set $x$ of generators and some set $r$ of quadratic relations on $x$. Let ${\mathcal H}(x)$ be the set of ordinary generating functions of all such operads with various $r$. Then there is a natural $n$ such that if $|x| \ge n$ then

(i) the  set ${\mathcal H}(x)$ is infinite;

(ii) 
for some $x$  and some rational function $Q(z)$ with integral coefficients, there does not exist an algorithm which takes as an input the list $r$ such that  there is a coefficient-wise inequality $P(z)\le Q(z)$ and returns {\em TRUE} if 
the equality $P(z)= Q(z)$ holds and {\em FALSE} if not;

 (iii) for some $x$ and some rational number $q$, there does not exist an algorithm which takes as an input the list $r$ such that $c(P) \le q$ for $c(P)= \lim\sup_{n\to\infty} \sqrt[n]{\dim P_n} $  and returns {\em TRUE} if  $c(P) = q$
 and {\em FALSE} if $c(P) < q$.
\end{theorem}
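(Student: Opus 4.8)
The plan is to reduce the halting problem to the decision problems in (ii) and (iii) through an explicit passage from finitely presented associative algebras to non-symmetric quadratic operads, and then to obtain (i) as a formal consequence of (ii). The first step is a clean algebra-to-operad dictionary. Let $A$ be a graded associative algebra generated in degree one by $a_1,\dots,a_k$ with homogeneous quadratic relations $\sum_{i,j} c^{(s)}_{ij}\,a_i a_j=0$. I form the non-symmetric operad $P(A)$ on $k$ binary generators $\mu_1,\dots,\mu_k$ by imposing the quadratic relations $\mu_i\circ_2\mu_j=0$ for all $i,j$ together with the lifts $\sum_{i,j} c^{(s)}_{ij}\,\mu_i\circ_1\mu_j=0$. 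The relations of the first kind annihilate every planar tree that is not a left comb, and, since the sequential composition $\circ_1$ of left combs is associative by the operad axioms, it models concatenation of words; hence a monomial basis of $P(A)_n$ is given by the left combs $a_{i_1}\cdots a_{i_{n-1}}$ taken modulo the relations of $A$. This yields
$$\dim P(A)_n = \dim A_{n-1}, \qquad P(A)(z)=z\,H_A(z), \qquad c(P(A))=c(A),$$
where $H_A(z)=\sum_{m\ge 0}(\dim A_m)\,z^m$ and $c(A)=\limsup_n \sqrt[n]{\dim A_n}$. Thus statements about the ordinary generating series and the exponential growth of $P(A)$ become statements about the Hilbert series and the growth of $A$.

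The second step feeds in the undecidability. The input I rely on is that there is a fixed finite generating set for which neither the Hilbert series nor the growth rate of a quadratic graded algebra on those generators is computable from its relation list. This is produced by simulating a universal Turing machine by a length-preserving string rewriting system: configurations are encoded as words over a fixed alphabet, the machine-specific transition rules become length-two length-preserving relations $a_i a_j = a_p a_q$, and all machine-dependent data is placed into the quadratic relation list $r$, while the generating set $x$ is fixed once and for all by the encoding scheme. One superimposes a free part of rate $q>1$ on the simulation so that, while the machine keeps running, the admissible words form a regular language of full exponential rate $q$, whereas a halting configuration triggers the quadratic reductions and collapses the growth to a strictly smaller rate (one may in fact arrange that the relevant component becomes finite-dimensional, so the rate drops to $0$).

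Third, I assemble (ii) and (iii). For (ii), let $G(z)$ be the rational, integer-coefficient counting series of the regular language of admissible words in the absence of halting and set $Q(z)=z\,G(z)$; by construction $P(A)(z)\le Q(z)$ coefficientwise for every machine, with equality precisely when the machine never halts, so an algorithm deciding $P=Q$ would decide non-halting. For (iii), take $q$ to be the free rate above; then $c(P(A))\le q$ always, and, since the $\limsup$ is governed by the tail, $c(P(A))=q$ exactly when the machine never halts while $c(P(A))<q$ once it halts, so an algorithm separating $c(P)=q$ from $c(P)<q$ would again decide non-halting. In both cases the undecidability of the halting problem gives the claim, the fixed $x$ of the encoding (enlarged by inert generators if necessary) furnishing the constant $n$. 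I expect the main obstacle to be exactly the growth-switch of the previous paragraph: producing, inside purely quadratic relations over a bounded alphabet, a faithful simulation in which halting forces a genuine drop of the exponential growth rate rather than a mere lower-order correction.

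Finally, (i) follows formally from (ii). For each fixed arity $n$ the dimension $\dim P_n$ is computable from $r$ by finite linear algebra in the arity-$n$ component of the free operad modulo the ideal generated by $r$, so every finite prefix of $P(z)$ is computable. If ${\mathcal H}(x)$ were finite, then all of its members differing from $Q$ would already differ from $Q$ within some uniformly bounded number $N$ of coefficients; computing the first $N$ coefficients of $P$ and comparing them with $Q$ would then decide whether $P=Q$, contradicting (ii). Hence ${\mathcal H}(x)$ is infinite for the same $x$.
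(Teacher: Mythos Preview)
Your algebra-to-operad dictionary is exactly the construction the paper uses (attributed to Dotsenko): set $P_k=A_{k-1}$, let $\circ_1$ be multiplication in $A$, and kill all $\circ_i$ for $i\ge 2$; your explicit presentation with binary generators $\mu_i$ and the relations $\mu_i\circ_2\mu_j=0$ is the quadratic presentation of that operad, and the identities $P(A)(z)=z\,H_A(z)$ and $c(P(A))=h(A)$ are the same as in the paper. So on the reduction step your proposal and the paper coincide.

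The difference is what happens after the reduction. The paper does \emph{not} build a Turing-machine simulation; it simply invokes existing undecidability theorems for quadratic associative algebras on a fixed generating set: Anick's results \cite{an2} give (i) and (ii), and the author's own \cite{gr} gives (iii). You instead try to reconstruct those theorems from scratch. Your outline is in the right spirit, but the step you yourself flag --- the ``growth-switch'' forcing a genuine drop in the exponential rate upon halting, inside purely quadratic relations on a bounded alphabet --- is precisely the hard content of those cited papers, and your sketch does not supply it. In particular, length-preserving rewriting rules $a_ia_j=a_pa_q$ only identify monomials; the Hilbert series then counts equivalence classes of words, and there is no evident mechanism by which reaching a halting configuration collapses the growth rather than merely merging finitely many classes. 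Anick's actual construction proceeds via Diophantine equations (through the MRDP theorem) and is considerably more delicate than a Thue-system encoding; the growth-rate version in \cite{gr} builds further on that. So as written, your proposal has a real gap at exactly the point you anticipated, whereas the paper sidesteps it by citation.

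One thing you do that the paper does not: your derivation of (i) from (ii) --- if $\mathcal H(x)$ were finite, a bounded-prefix comparison would decide $P(z)=Q(z)$ --- is a clean formal argument and is correct (each $\dim P_n$ is indeed computable by finite linear algebra). The paper instead obtains (i) directly from Anick's theorem.
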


Note that Dotsenko and Brenner conjectured that `for given arities of generators $a_1,\dots , a_d$ and weights
of relations $w_1, \dots,w_r$, the set of possible Hilbert series of operads with these
types of generators and relations is always finite'~(see \cite{db}, Conjecture 10.4.1.1).  
So the part (ii) of Theorem~\ref{th:undec} solves  this conjecture negatively even in the case $a_1 = \dots =a_d=2$
and $w_1= \dots =w_r =3$. Using the family of algebras from~\cite[Example~3.1]{iyudu2017automaton}
in place of the algebra $A$ in the proof below we see that one can put here  $d=r=3$.

\begin{proof}
It is pointed out by Dotsenko~\cite{dots2016} that each graded connected associative algebra $A = A_0\oplus A_1\oplus\dots$
(where $A_0 = k$) can be considered as a non-symmetric operad $P = P(A)$ by putting $P_k = A_{k-1}$ for all $k\ge 1$,
$A_m\circ_i A_n = 0$ for $i\ge 2$ and $a \circ_1 b = ab $ for homogeneous $a,b \in A$. Moreover, the operad $P$ is quadratic if $A$ is quadratic. In this case the relations of $P$ can be easily recovered by the relations of $A$. Obviously, the ordinary generating function  of $P$ is $P(z) = zA(z)$ where $A(z) = \sum_{k\ge 0} \dim A_k$ is the Hilbert series of the algebra $A$. Moreover, in this case 
 \begin{displaymath}
c(P)= \lim\sup_{n\to\infty} \sqrt[n]{\dim P_n} = \lim\sup_{n\to\infty} \sqrt[n]{\dim A_{n-1}} 
\end{displaymath}
 \begin{displaymath}
= \lim_{n\to\infty} \sqrt[n]{\dim A_{n}} 
 \end{displaymath}
is the exponent of growth (aka entropy) $h(A)$ of the algebra $A$.  Parts (i) and~(ii) follow from the corresponding theorems on the Hilbert series of quadratic algebras by~\cite{an2}. The theorem on the exponent of growth of quadratic algebras
which is analogous to~(iii) has been proved in~\cite{gr} using Anick's construction. So,  part~(iii) follows as well. 
\end{proof}

In fact, one can show even more in part~(iii). 
It follows from the results of~\cite{an2} and~\cite{gr} that there is a quadratic polynomial $f(z) = 1-gz+rz^2$ with two rational roots $p^{-1} $ and $q^{-1} $ (where $q$ is the number from the part~(iii) and $|p| < q$) such that either $\dim P_n < \gamma c_2^n$ for all $n$ with $\gamma>0, 0<c_2<q$ or $A(z) = 1/f(z)$. 
In the last case, the sequence $\{\dim P_n\}$ is  a linear recurrence of order 2, so that $\dim P_n = \alpha q^n +\beta p^n \cong \alpha q^n$ for some $\alpha, \beta >0$. For the symmetrization $\p$ of $P$ with  $\dim \p_n =  n! \dim P_n$ we have  $\dim \p_n < \gamma c_2^n n!$ in the first case and $\dim \p_n \cong \alpha q^n n!$ in the second case. 
By part~(iii), there is no algorithm to separate these two cases. 
This means that there is no algorithm to recognize the asymptotic growth both for symmetric and non-symmetric operads.

\section{Estimates for the growth of operads}

\label{sec:est}

In this section, we discuss an approach to lower bounds for generating series of symmetric operads based on the results of~\cite{kurosh}.
Note that Dotsenko has obtained the same results and even more in the case of 
monomial shuffle operads~\cite[Section~3]{dots2012}.

The next theorem is an operadic version of famous Golod--Shafarevich
theorem which gives a criterion for an associative algebra to be
infinite-dimensional~\cite{gs}. 
The proof of the first statement of it is sketched in~\cite[Theorem~4.1]{kurosh}.

\begin{theorem}
\label{th:GS_oper}
 Let $\p$ be a symmetric  operad minimally generated by an
$\s$-module $X \subset \p$ with a minimal $\s$-module of relations
$R \subset \f(X)$. We assume here that both these
$\s$-modules are locally finite, that is, all their graded
components are of finite dimension.

Suppose that the formal power series  $t/f(t)$
 has non-negative coefficients, where  $f(t) = t - X(t)
            +R(t)$. Then  the
operad $\p$ is infinite and there is a coefficient-wise inequality of formal power series
$$\p(z) \ge f^{[-1]} (z),
$$ where $f^{[-1]} (z)$ is the composition power series inverse of $f(t)$.
\end{theorem}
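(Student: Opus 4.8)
The goal is to establish a coefficient-wise lower bound $\p(z) \ge f^{[-1]}(z)$ where $f(t) = t - X(t) + R(t)$, mimicking the classical Golod–Shafarevich argument for associative algebras but in the operadic setting. The plan is to build a minimal free resolution of the trivial module (or equivalently track a minimal presentation) of the operad $\p$ and extract an inequality among the dimensions of the graded pieces. First I would set up the basic exact sequence of $\s$-modules coming from the minimal presentation: if $\p$ is minimally generated by $X$ with minimal relations $R$, then there is a surjection from the free operad $\f(X)$ onto $\p$ whose kernel is generated by $R$. The key structural input is that the generating series respect the composition/substitution operations of the free operad, so that passing through the bar-type or Koszul-type complex converts the operadic compositions into composition of generating functions rather than ordinary products (this is exactly why the composition inverse $f^{[-1]}$ appears rather than the naive $1/f$).

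The central computation is to interpret the three-term expression $f(t) = t - X(t) + R(t)$ homologically. The term $t$ corresponds to the identity operation (the single generator of the unit $\p_1$), $X(t)$ is the generating series of the generating $\s$-module, and $R(t)$ is the generating series of the relations. I would argue that the alternating sum over a minimal resolution gives, after applying generating series and using that composition of free operad elements corresponds to composition of power series, the relation that $\p(z)$ composed appropriately with $f$ dominates $z$ coefficient-wise. Concretely, one shows $f(\p(z)) \le z$ coefficient-wise when one truncates the resolution after the relations term, because the higher syzygies contribute non-negatively and are being dropped. The hypothesis that $t/f(t)$ has non-negative coefficients is precisely what guarantees that $f$ is (formally) invertible under composition with a power series inverse having non-negative coefficients in the relevant range, so that the inequality $f(\p(z)) \le z$ can be inverted to yield $\p(z) \ge f^{[-1]}(z)$ without reversing the direction of the inequality.

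The main obstacle, and the step requiring the most care, is justifying the passage from the homological/resolution-level inequality to the generating-series inequality while controlling signs. In the associative case the analogous statement uses that the Euler characteristic of the minimal resolution gives $\le$ rather than $=$ precisely because the contributions of $\tor^{\ge 3}$ are non-negative; here one must verify the operadic analogue, namely that the minimal resolution of the trivial operadic module has its third and higher terms contributing non-negatively to the alternating sum of generating series under composition. I would handle this by invoking local finiteness of $X$ and $R$ (so all series are well-defined formal power series with finite coefficients) and by a degree-by-degree induction: at each arity $n$, the inequality $[z^n] f(\p(z)) \le [z^n] z$ is checked using that the omitted higher syzygies have non-negative dimension. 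The non-negativity hypothesis on $t/f(t)$ then feeds into an inductive argument showing $f^{[-1]}$ has non-negative coefficients and that composing the inequality $f(\p(z)) \le z$ with the order-preserving operator $f^{[-1]}$ preserves the inequality, yielding both the claimed bound and, as a byproduct, the infiniteness of $\p$ since $f^{[-1]}(z)$ then has infinitely many nonzero coefficients.
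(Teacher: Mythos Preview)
Your overall architecture matches the paper's: start from the beginning of a minimal free resolution of the trivial $\p$-module $I=\p/\p_+$, take Euler characteristics, use that $(M\circ\p)(z)=M(\p(z))$, and then invert. But you have the key inequality pointing the wrong way, and your inversion step is internally inconsistent.

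Concretely, the exact sequence
\[
0 \to \Omega^3 \to R\circ\p \to X\circ\p \to \p \to I \to 0
\]
gives $\Omega^3(z) = R(\p(z)) - X(\p(z)) + \p(z) - z = f(\p(z)) - z$, and since $\Omega^3(z)\ge 0$ coefficient-wise one obtains $f(\p(z)) \ge z$, not $f(\p(z)) \le z$ as you wrote. This matters because the way you then propose to ``invert'' is incoherent: you want $f^{[-1]}$ to have non-negative coefficients (so it is coefficient-wise monotone \emph{increasing}) and simultaneously to flip $\le$ into $\ge$. A monotone increasing $f^{[-1]}$ preserves the direction; applied to the correct inequality $f(\p(z)) \ge z$ it yields $\p(z)=f^{[-1]}(f(\p(z)))\ge f^{[-1]}(z)$, which is exactly the paper's conclusion.

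The second place where your sketch is thinner than it should be is the verification that $f^{[-1]}$ has non-negative coefficients and is infinite. You gesture at an ``inductive argument''; the paper instead uses the Lagrange inversion formula, which expresses the $n$-th coefficient of $f^{[-1]}$ as a derivative at $0$ of $(t/f(t))^n$. The hypothesis that $t/f(t)$ has non-negative coefficients then makes non-negativity (and the existence of infinitely many positive coefficients) immediate. This is the cleanest way to justify both the monotonicity of $f^{[-1]}$ and the infiniteness of $\p$.
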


\begin{proof}
We are working in the category of right graded modules over the operad $\p$.
Consider the trivial bimodule $I = \p / \p_+$ (where $\p_+ = \p(2)
\oplus \p(3) \oplus \dots$ is the
    maximal ideal of $\p$, as before). For the generators of the
    beginning of its minimal free resolutions, we have
    $\tor_0^\p (I,I)\cong I$, $\tor_1^\p (I,I)\cong X$ and   $\tor_2^\p (I,I)\cong
    R$, see~\cite[Sec.~3]{kurosh}. This means that the beginning of the resolution looks as
    \begin{equation}
    \label{resol_k}
        0\to \Omega^3 \to R\circ \p \stackrel{d_2}{\to} X\circ \p \to \p \to
        I \to 0,
    \end{equation}
    where $\Omega^3$ is the kernel of $d_2$.

    Obviously, the formal power series $\Omega^3(z)$ has nonnegative
    coefficients. 
    Taking the Euler characteristics of the exact
    sequence~(\ref{resol_k}), we get an equality of formal power
    series
    $$
    \Omega^3(z) =  (R\circ \p)(z) - (X\circ \p)(z) + \p(z) -I(z)
    \ge 0.
    $$
    Since $I(z)=z$, we obtain a coefficient-wise inequality
    $$
         R(\p(z)) - X(\p(z)) +\p(z) - z \ge 0,
    $$
    or
    $$
            f(\p(z))  = z+ \Omega^3(z) \ge z.
    $$
    Let $y = z+ \Omega^3(z)$. By the Lagrangian inverse formula,
    we have
    $$
           \p(z) = f^{[-1]}(y) = \sum_{i\ge 0} \frac{\pi_n}{n!}y^n,
    $$
    where $\pi_n$ is the value of the derivative
    $ \frac{d^{n-1}}{d t^{n-1}} \left( \frac{t}{f(t)} \right)^{n}
    $ at $t=0$.

    Since the formal power series $t/f(t)$
     has nonnegative coefficients, it follows that the series $\left( \frac{t}{f(t)}
    \right)^{n}$ and all its derivations satisfy this property as well.
    It
    follows that  $\pi_n \ge 0$ for all $n\ge 0$. Moreover, if $a_k z^k$ is any positive summand in the
    decomposition of $t/{f(t)} $, then for every $n\ge 0$ we have
    $
    \pi_{kn+1} \ge (kn)! \left( \begin{array}{c} kn+1 \\ n \end{array}
    \right) a_k > 0,
    $
    so that the series $f^{-1}(y)$ is infinite with nonnegative coefficients.
    It follows that $\p(z) = f^{[-1]}(z +\Omega^3(z))
    \ge f^{[-1]} (z) $. In particular, the series $\p(z)$ is infinite.
\end{proof}

\begin{cor}
\label{cor:GS_binary}
Let $\p$  be a symmetric operad and let $X$ and $R$
be as above. Suppose that  $\p$ is generated by binary operations
(that is, $X=X(2)$). Suppose that the function
$$
            \phi(z) = 1 - \frac{X(z)}{z} +  \frac{R(z)}{z}
$$
is analytic in a neighbourhood of zero (it is always the case if
$X$ is finitely generated) and  has a positive real root $z_0$ in
this neighbourhood.
 Then the dimensions 
of the operad's components 
satisfy 
the inequalities
$$
\frac{(2n)!}{(n-1)!} z_0^{-n} \le  \dim \p_{n+1} \le \frac{(2n)!}{n!} (\dim X/2)^n.
$$
\end{cor}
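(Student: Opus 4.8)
The plan is to prove the two inequalities separately: the upper bound by a direct comparison with the free operad, and the lower bound by feeding the hypotheses into Theorem~\ref{th:GS_oper} and then extracting coefficient asymptotics.

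For the upper bound I would use that $\p$ is a quotient of the free operad $\f(X)$ generated by the binary $\s$-module $X$, so that $\dim \p_{n+1} \le \dim \f(X)_{n+1}$. The exponential generating series $F(z) = \f(X)(z)$ satisfies the standard functional equation $F = z + X(F)$, which for $X$ concentrated in arity $2$ with $\dim X(2) = \dim X$ reads $F = z + \tfrac{\dim X}{2}F^2$. Solving this quadratic gives $F(z) = (\dim X)^{-1}\bigl(1 - \sqrt{1 - 2(\dim X)\,z}\bigr)$, whose coefficients are Catalan numbers up to scaling; reading off $[z^{n+1}]F$ yields $\dim \f(X)_{n+1} = \frac{(2n)!}{n!}(\dim X/2)^n$, which is exactly the claimed upper bound. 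This part is routine.

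For the lower bound, the first step is to verify the hypothesis of Theorem~\ref{th:GS_oper}. Writing $f(t) = t - X(t) + R(t) = t\,\phi(t)$, the series $t/f(t)$ equals $1/\phi(t)$, and the role of the assumption that $\phi$ be analytic near $0$ with $\phi(0) = 1$ and a positive real root $z_0$ is precisely to guarantee that $1/\phi$ has non-negative coefficients and radius of convergence $z_0$ (note that $\phi(t) = 1 - \tfrac{\dim X}{2}t + R(t)/t$ with $R(t)/t$ of non-negative coefficients, so only the linear term is negative). Granting this, Theorem~\ref{th:GS_oper} gives $\p(z) \ge f^{[-1]}(z)$ coefficient-wise. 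Applying the Lagrange inversion formula to $f(t) = t\,\phi(t)$ then converts the bound into $\dim \p_{n+1} \ge (n+1)!\,[z^{n+1}]f^{[-1]}(z) = n!\,[t^n]\phi(t)^{-(n+1)}$, so the whole problem reduces to bounding the single coefficient $[t^n]\phi(t)^{-(n+1)}$ from below.

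I expect this coefficient estimate to be the main obstacle. The cleanest route is singularity analysis: since $z_0$ is the dominant (smallest-modulus) singularity of $1/\phi$ and $f$ has a simple zero there, the function $\phi(t)^{-(n+1)}$ has a pole of order $n+1$ at $z_0$, and extracting its contribution—either by the residue identity $[t^n]\phi(t)^{-(n+1)} = \mathrm{Res}_{t=0}\,f(t)^{-(n+1)}$ paired with the residue at $t = z_0$, or by a saddle-point estimate of the Cauchy integral—produces a factor $z_0^{-n}$ together with a binomial prefactor of order $\binom{2n}{n}$. Combining this prefactor with the $n!$ coming from Lagrange inversion reassembles the factorial $\tfrac{(2n)!}{(n-1)!}$ and yields the stated lower bound $\frac{(2n)!}{(n-1)!}z_0^{-n}$. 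The care required here is twofold: deducing the non-negativity and dominant-singularity structure of $1/\phi$ from the bare positive-root hypothesis, and controlling the sub-dominant residues so that the estimate is valid (for all $n$, or at least asymptotically). Together with the upper bound, this pins the growth of $\dim \p_{n+1}$ to the regime $c^n\,\frac{(2n)!}{n!}$ with $z_0^{-1} \le c \le \dim X/2$.
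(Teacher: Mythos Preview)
Your upper-bound argument matches the paper's exactly: compare with the free operad $\f(X)$, solve $F = z + \tfrac{\dim X}{2}F^2$, and read off the Catalan expansion.

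For the lower bound you correctly set up Theorem~\ref{th:GS_oper} plus Lagrange inversion, reducing everything to a lower bound on $[t^{n}]\phi(t)^{-(n+1)}$. The divergence from the paper is in how that coefficient is estimated. The paper does \emph{not} use singularity analysis or residues at all. Instead it invokes a coefficient-wise inequality (cited as~\cite{me}): for any series of the shape $\phi(t)=1-at+t^{2}r(t)$ with $r$ having non-negative coefficients and $\phi(z_0)=0$, one has
\[
\phi(t)^{-1}\;\ge\;(1-t/z_0)^{-1}
\]
termwise. This single lemma does double duty: it supplies the non-negativity of $t/f(t)=\phi(t)^{-1}$ that Theorem~\ref{th:GS_oper} requires (the step you flagged as needing care but did not justify---your parenthetical ``only the linear term is negative'' does not by itself force $\phi^{-1}\ge 0$), and, after taking $(n{+}1)$-st powers of non-negative series, it gives $\phi(t)^{-(n+1)}\ge (1-t/z_0)^{-(n+1)}$ coefficient-wise. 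The right-hand side has explicit binomial coefficients, and reading off $[t^{n}]$ produces the stated bound for \emph{every} $n$, not just asymptotically.

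Your proposed route via dominant-singularity/residue analysis could in principle recover the same leading term, but as you yourself note it would deliver the inequality only asymptotically (and with an unspecified constant), whereas the Corollary asserts it for all $n$. So the gap is real: the missing ingredient is precisely the elementary coefficient-wise comparison $\phi^{-1}\ge(1-t/z_0)^{-1}$, which replaces the entire analytic apparatus by a one-line estimate.
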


\begin{proof}
In the above notations, we have $$
           \p(z) = f^{[-1]}(y) = \sum_{n\ge 0} \frac{\pi_n}{n!}y^n \ge \sum_{n\ge 0} \frac{\pi_n}{n!}z^n ,
    $$
    where $\pi_n$ is the value of the derivative
    $ \frac{d^{n-1}}{d t^{n-1}} \left( \phi(z)^{-n}  \right)
    $
    at $z=0$. This means that $\pi_n /(n-1)!$ is the coefficient of $z^{n-1}$ in the formal power series $ \phi(z)^{-n}$.
    
    Put $a = \dim X /2$. The 
 series $\phi(z)$
 has the form $1-az+ z^2r(z)$, where $a >0$ and the series $r(z)$ has nonnegative coefficients. Then 
it follows from~\cite{me} that there is a coefficient-wise
inequality 
$$\phi(z)^{-1} \ge \sum_{k\ge 0} z_0^{-k} z^k = (1-z/z_0)^{-1}.$$
Using the binomial theorem, we get
 $$ \phi(z)^{-n} \ge (1-z/z_0)^{-n} = \sum_{k\ge 0}  \frac{(n+k-1)!}{k!(n-1)!} z_0^{-k} z^k .$$
So, we get the first inequality:
$$
\dim \p_{n+1} \ge \pi_{n+1} \ge n! \left( \frac{(2n)!}{n!(n-1)!} z_0^{-n}\right) =  \frac{(2n)!}{(n-1)!} z_0^{-n}.
$$
On the other hand, for the free operad $F$ generated by $X$ we have $R=0$ and $\Omega^3 =0$, so that
$$
F (z) = (t-X(t))^{[-1]} = (t-at^2 )^{[-1]} = \frac{1-\sqrt{1-4a z}}{2a} 
$$
$$ 
= \sum_{n\ge 0} z^{n+1} C_n a^n, 
$$
where 
$C_n = \frac{(2n)!}{n!(n+1)!}$ is the $n$-th Catalan number. The coefficient-wise inequality 
$F(z) \ge \p(z)$ gives the second inequality $\dim \p_{n+1} \le \frac{(2n)!}{n!} (\dim X/2)^n$.
\end{proof}

Consider, in particular, the case of quadratic operad, that is, an operad having generators of arity 2 and relations of arity 3 only. 

\begin{cor}
\label{cor:GS_quadr}
Let $\p$  be a quadratic symmetric operad, that is,  $X=X(2)$ and $R=R(3)$, and let $c= \dim X$ and $d = \dim R$.
If $d\le 3 c^2/8$, then 
$$
\p(z) \ge (z-\frac{c}{2}z^2+\frac{d}{6}z^3)^{[-1]} .
$$
In particular, $\frac{(2n)!}{(n-1)!} z_1^{-n} \le  \dim \p_{n+1} \le \frac{(2n)!}{n!} (c/2)^n$,
where \linebreak
 $z_1 = 3\frac{c/2-\sqrt{c^2/4 - 2d/3}}{d}$.
\end{cor}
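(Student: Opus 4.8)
The plan is to reduce the statement to Theorem~\ref{th:GS_oper} and Corollary~\ref{cor:GS_binary} by making the two generating series $X(z)$ and $R(z)$ explicit and by checking that the numerical hypothesis $d\le 3c^2/8$ is precisely the condition those results require.

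First I would record the shape of $f$. Since the generating series attached to an $\s$-module is the exponential one and $X=X(2)$, $R=R(3)$ are concentrated in arities $2$ and $3$, we have $X(z)=\frac{\dim X(2)}{2!}z^2=\frac{c}{2}z^2$ and $R(z)=\frac{\dim R(3)}{3!}z^3=\frac{d}{6}z^3$. Hence
$$
f(t)=t-X(t)+R(t)=t-\frac{c}{2}t^2+\frac{d}{6}t^3,
$$
so that $t/f(t)=1/\phi(t)$ with $\phi(t)=f(t)/t=1-\frac{c}{2}t+\frac{d}{6}t^2$, which is exactly the function $\phi$ of Corollary~\ref{cor:GS_binary} specialized to these data.

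Next I would verify the positivity hypothesis of Theorem~\ref{th:GS_oper}, namely that $t/f(t)=\phi(t)^{-1}$ has nonnegative coefficients. The quadratic $\phi$ has discriminant $\frac{c^2}{4}-\frac{2d}{3}$, which is nonnegative exactly when $d\le 3c^2/8$; under this hypothesis $\phi$ has real roots, and since $\phi(0)=1>0$ while the sum $3c/d$ and product $6/d$ of the roots are positive, both roots are positive. Writing $z_1\le z_2$ for them, the partial-fraction expansion $\phi(t)^{-1}=\frac{z_1 z_2}{(z_1-t)(z_2-t)}$ (equivalently, the result of~\cite{me}) shows that $\phi(t)^{-1}$ has nonnegative coefficients and in fact dominates $(1-t/z_1)^{-1}$ coefficient-wise, where $z_1=3\frac{c/2-\sqrt{c^2/4-2d/3}}{d}$ is the smaller root. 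Theorem~\ref{th:GS_oper} then applies and gives the first claim $\p(z)\ge f^{[-1]}(z)=(z-\frac{c}{2}z^2+\frac{d}{6}z^3)^{[-1]}$.

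Finally, for the ``in particular'' part I would invoke Corollary~\ref{cor:GS_binary} directly: $\phi$ is a polynomial, hence analytic near $0$, and we have just exhibited its positive real root $z_0=z_1$, so the corollary yields $\frac{(2n)!}{(n-1)!}z_1^{-n}\le \dim\p_{n+1}\le \frac{(2n)!}{n!}(c/2)^n$ upon substituting $\dim X=c$. The only point requiring care---the main obstacle, though a mild one---is to confirm that the relevant root is the smaller one $z_1$, so that the lower bound is as sharp as stated, together with the dominance $\phi(t)^{-1}\ge(1-t/z_1)^{-1}$; one also checks that $z_1$ varies continuously into the free-operad value $2/c$ as $d\to 0$. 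All of this follows from the elementary analysis of the quadratic $\phi$ carried out above.
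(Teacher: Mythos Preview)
Your proposal is correct and follows precisely the approach implicit in the paper: the corollary is stated without proof because it is the direct specialization of Theorem~\ref{th:GS_oper} and Corollary~\ref{cor:GS_binary} to $X(z)=\frac{c}{2}z^2$, $R(z)=\frac{d}{6}z^3$, and you have carried out exactly this specialization, including the identification of the discriminant condition $d\le 3c^2/8$ with the existence of the positive real root $z_1$ of $\phi$. The only remark is that your partial-fraction verification that $\phi(t)^{-1}$ has nonnegative coefficients, while correct, is already subsumed by the reference~\cite{me} invoked in the proof of Corollary~\ref{cor:GS_binary}, so it need not be redone.
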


\begin{rema}
Note that the inequality of Corollary~\ref{cor:GS_binary} 
can be re-written in the form $c_1^n n^n \le \dim \p_n \le c_2^n n^n$
where the constants $c_1$ and $c_2$
 are constructively recovered by the generators and the relations of the operad $\p$.
In general, this result cannot be effectively re-written in the form~$\dim \p_n \sim c^n n^n$ 
because even if such $c$ exists it cannot be algorithmically recognized according to~Theorem~\ref{th:undec}. 

Note that for infinitely presented operads similar asymptotics does not generally exist even for operads generated by a single binary operation, that is, for a variety of non-associative algebras~\cite{zaitsev_no}.
\end{rema}

\section{Monomial bases and Groebner bases in operads}

\label{sec:groeb}

The Groebner bases in operads are introduced in~\cite{dk}. We also refer the reader to~\cite{Loday_Valet} and~\cite{db}
for the details. Here we briefly recall some basics. 

Fix a discrete set $\Omega$ of generators of a (symmetric or non-symmetric) free operad.
 A {\em nonsymmetric monomial} is a multiple composition of operations from $\Omega$. 
 A {\em symmetric monomial} is a nonsymmetric monomial applied to pairwise different  variables $x_{i_1}, x_{i_2}, \dots$ as a composite operation. Each monomial is represented by a rooted planar tree with 
 internal vertices labelled by operations. We assume that the edges of the tree lead from the root to the leaves which are free edges. In the case of symmetric  monomial, the leaves are also labelled by  the variables $x_{i_1}, x_{i_2}, \dots$ 

All non-symmetric monomials (including the empty monomial corresponding to the identical operation) form a linear basis of the free non-symmetric operad generated by $\Omega$. In contrast, all symmetric monomials generate the free operad $\f = \f(\s\Omega)$ as a linear dependent set. To describe a linear basis of $\f$, let us mark each edge in the tree by the minimal label of a leaf which can be reached from that edge. A symmetric monomial is called {\em shuffle monomial} if for every internal vertex and for the root, the minimal edge leading from it is the leftmost. Then the shuffle monomials labelled by the sets $\{ x_1, \dots , x_n \}$ in some ordering (where for each monomial $n$ is equal to its arity) form a linear basis of $\f$.

Two non-symmetric monomials are called isomorphic if they are isomorphic as labelled trees. Two symmetric monomials are isomorphic 
if the underlying non-symmetric monomials are isomorphic and the lists of the labels of their leaves collected from the left to the right are isomorphic as ordered sets.  A (non-)symmetric monomial $P$ is {\em divisible} by a (non-)symmetric monomial $Q$ if $Q$ is isomorphic to a submonomial of $P$ where 'submonomial' means a labelled subtree with the labels on leaves induced by the labels on the free edges. For example, see Figure~\ref{pic::shuffle_monom} (copied from~\cite{kp}) to ensure that the shuffle monomial 
$g(f(f(x_1,x_3)$, $g(x_2,f(x_4,x_9)$, $g(x_5,x_6,x_{11}))),x_7,f(x_8,x_{10}))$ is divisible by the shuffle monomial
$ f(x_1,g(x_2,x_3,x_4))$.


\begin{figure*}
\includegraphics{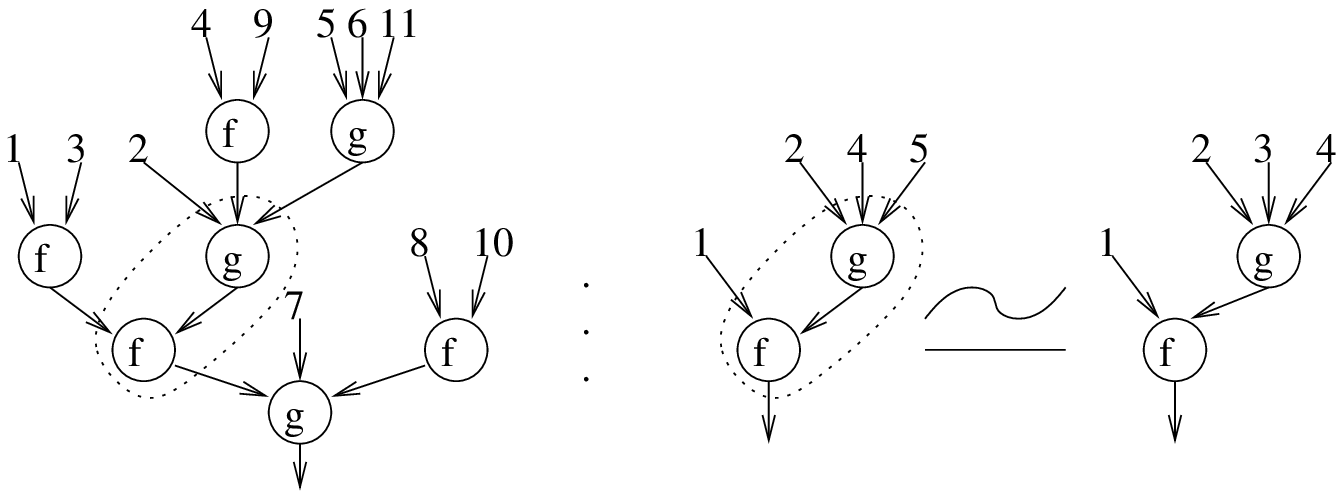}
\caption{Divisibility of shuffle monomials}
\label{pic::shuffle_monom}
\end{figure*}

A {\em shuffle composition}  is a composition of two shuffle monomials (as trees) 
whose leaves are labelled  in such a way that 
the 
composition is again a shuffle monomial and the composed monomials are isomorphic to the corresponding submonomials of the result.  The symmetric operads considered as sequences of vector spaces generated by shuffle monomials coupled with the set of all shuffle compositions are called {\em shuffle operads}. It the theory of Groebner bases, the (abstract) shuffle operads are considered in place of the symmetric operads so that the actions of the symmetric groups are  not used here.

There are families of orderings on the sets of non-symmetric and shuffle monomials which are compatible with the corresponding compositions. This defines the notion of the leading term of an element of free operad and leads to a rich Groebner bases theory. The theory includes a version of the Buchberger algorithm~\cite{dk} and even the triangle lemma~\cite{db}.
We say that an operad $\p$ has a finite Groebner basis (of relations) if the ideal of its relations admits a finite Groebner basis as an ideal of a free operad. Whereas general operad has no finite Groebner basis, a number of important operads (including the classical operads of commutative, associative and Lie algebras) admit such bases.

The only known implementation of Groebner bases algorithms for operad is the Haskell package {\sf Operads}~\cite{dv}. 
Its slightly improved version with some bugs fixed by Andrey Lando 
can be downloaded at  https://github.com/Dronte/Operads .
Experiments with operads of non-associative algebras (that is, operads generated in arity two by a two-dimensional subspace) provided by Lando show that recent version of the package allows to calculate (in a standard laptop)  the Groebner basis of an ideal generated by identities of degree 3 up to degree 6. The last degree is essentially less than the  degree 
in some analogous calculations for associative algebras provided, e.~g., by BERGMAN. One could hope that new algorithms (including a possible F4 algorithm for operads which could generalize an analogous algorithm for Groebner--Shirshov bases in associative algebras~\cite{nc_f4}) and new implementation principles will essentially extend the performance of the computer algebra software for such calculations.

\section{Growth and generating series for operads with finite Groebner bases} 

\label{sec:we}


The generating series of an operad with known Groebner basis is equal to the generating series of the corresponding {\em monomial} operad, that is, a shuffle operad or a non-symmetric operad whose relations are the leading monomials of the corresponding Groebner basis. 
The dimension of the $n$-th component of a monomial operad is equal to the number of the monomials of arity $n$ which are not divisible by the monomial relations of the operad. In this section, we consider the monomial operads only. Suppose that we know a (finite) subset $\widetilde G$ of the Groebner basis $G$ of an operad $\p$. Then we get a coefficient-wise inequality 
$$\widetilde \p(z) \le \p(z)
$$ for the monomial operad $\widetilde\p$ whose relations are the leading monomials of the elements of $\widetilde G$. This lower bound for $\p(z)$ complements the inequality of Theorem~\ref{th:GS_oper}. 

That is why the generating series of monomial operads defined by finite sets of monomial relations is of our interest here. 
For such an operad, the calculation of the dimensions of its components is a purely combinatorial problem of enumeration of the labelled trees which does not contain a subtree isomorphic to a relation as a submonomial (a pattern avoidance problem for labelled trees), see~\cite{dk-pattern}. Unfortunately, this problem is too hard to be treated recently in its full generality. In this section  we discuss some partial methods based on the results of~\cite{kp}.

First, let us discuss a simpler case of non-symmetric operads.

\begin{theorem}[\cite{kp}, Th.~{2.3.1}]
\label{th-nonsym-intro}
 The ordinary generating series of a non-symmetric operad with finite Gr\"obner
 basis is an algebraic function.
\end{theorem}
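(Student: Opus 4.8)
\section*{Proof proposal}

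The plan is to reduce the statement to a pattern-avoidance count for planar rooted trees, to recognise that count by a finite tree automaton, and to read off algebraicity from the resulting polynomial system. First I would use the reduction recalled at the start of this section: since $P$ has a finite Gr\"obner basis, its ordinary generating series coincides with that of the monomial operad $\widetilde P$ whose relations are the finitely many leading monomials $M=\{m_1,\dots,m_k\}$ of the basis. Thus we may assume from the outset that $P$ is monomial, and $\dim P_n$ equals the number of non-symmetric monomials of arity $n$ --- that is, planar rooted trees with internal vertices labelled by the generators $\Omega$, a vertex labelled by an arity-$t$ operation having exactly $t$ children, and with $n$ leaves --- that are not divisible by any $m_j$. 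The goal becomes to show that $P(z)=\sum_{n\ge 1}(\#\{\text{normal monomials of arity }n\})\,z^n$, grading by the number of leaves, is algebraic over $\mathbb{Q}(z)$.

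The core step is to exhibit the set of normal (pattern-avoiding) monomials as a \emph{regular tree language}, i.e.\ to build a finite bottom-up tree automaton recognising it. Reading a monomial from the leaves toward the root, I would attach to each subtree $T_v$ a state $q(v)$ recording exactly which top-anchored partial matches of the forbidden patterns occur: the set of pairs $(j,u)$, with $u$ a vertex of $m_j$, such that the subpattern of $m_j$ rooted at $u$ is isomorphic (as a labelled planar tree, pattern leaves matching anything) to a submonomial of $T_v$ rooted at $v$; together with a single absorbing reject state $\bot$, entered as soon as some full $m_j$ is completed anywhere inside $T_v$. Since the $m_j$ are finitely many and finite, there are only finitely many pairs $(j,u)$, hence finitely many states. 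The transition $\delta(\omega;q(c_1),\dots,q(c_t))$ is purely combinatorial: a subpattern rooted at $u$ with top label $\omega'$ matches at $v$ iff $\omega'=\omega$ and, for each input slot $i$, either the corresponding child subpattern is a leaf or it matches at $c_i$ (a condition read off from $q(c_i)$); the subtree is declared $\bot$ whenever this would complete a whole $m_j$. A monomial is normal precisely when its root state is not $\bot$.

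I would then translate the automaton into generating functions. For each non-reject state $s$ let $y_s(z)$ count allowed subtrees in state $s$ by number of leaves. A leaf contributes $z$ to the state it induces, while an internal vertex labelled by an arity-$t$ operation contributes no factor of $z$ and merges $t$ children, so that
\[
y_s(z) \;=\; [\,s = q(\mathrm{leaf})\,]\,z \;+\; \sum_{\substack{\omega\in\Omega,\ (s_1,\dots,s_{t(\omega)})\\ \delta(\omega;s_1,\dots,s_{t(\omega)})=s}} \ \prod_{i=1}^{t(\omega)} y_{s_i}(z),
\]
a finite system of polynomial equations with non-negative integer coefficients whose unique formal power series solution with $y_s(0)=0$ gives $P(z)=\sum_s y_s(z)$. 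Algebraicity then follows by standard elimination theory: such a proper positive polynomial system has algebraic power-series solutions (this is the Chomsky--Sch\"utzenberger phenomenon, i.e.\ the algebraic-systems theorem of analytic combinatorics), obtained concretely by eliminating all variables but one, via resultants or a Gr\"obner basis over $\mathbb{Q}(z)$, to produce a nonzero $\Phi(z,P(z))=0$.

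The main obstacle I expect is the construction and correctness of the automaton in the middle step --- proving that divisibility of monomials (containment of a pattern as a submonomial of a planar labelled tree) is genuinely a \emph{finite-state} property. The delicate point is that detecting a pattern whose root lies above the current subtree requires only a bounded amount of information from below, namely which of the finitely many subpatterns are already realised at the subtree's root; verifying that this bounded context suffices, that $\delta$ is well defined, and that it faithfully tracks divisibility (including the labelling induced on leaves) is where the real work lies. Once that is in place, the passage to the polynomial system and the appeal to elimination are routine.
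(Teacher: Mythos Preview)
Your argument is correct, and its overall shape matches the paper's: reduce to the monomial operad, observe that avoiding the finitely many forbidden submonomials is a finite-state condition on planar labelled trees, encode this as a finite polynomial system in auxiliary generating functions, and eliminate to a single algebraic equation for $P(z)$.

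The decomposition you choose is, however, the bottom-up dual of the paper's. The paper attaches to each normal monomial its \emph{stamp}, namely its truncation from the root down to level one less than the maximal height of a relation, and lets $y_i(z)$ count normal monomials whose maximal left divisor among the stamps is $m_i$; the polynomial recurrences come from how stamps overlap when one grafts subtrees at the leaves. Your tree-automaton states instead sit at the root of each \emph{subtree} and record which subpatterns of the $m_j$ are already matched there. Both bookkeeping schemes are finite for the same reason (bounded pattern height), and both yield a proper polynomial system with $P(z)$ as a polynomial in the $y$'s. Your Chomsky--Sch\"utzenberger framing makes the finiteness of the state space and the well-posedness of the system entirely transparent; the paper's stamp version typically gives many fewer variables (one per stamp rather than one per subset of pattern pieces) and comes with an explicit bound $d_1^2\cdots d_N^2$ on the degree of the algebraic equation for $P(z)$, with $d_i$ bounded by the maximal arity of a generator.
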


One of the methods for finding the algebraic equation for the generating series of a  non-symmetric operad $P$
 defined by a finite number of monomial relations $R$ is the following. We consider the  monomials (called stamps)  of the level  less than the maximal level of an element of $R$ which is nonzero in $P$. For each stamp $m=m_i$, we consider the generating function $y_i(z)$ of the set of all nonzero monomials which are left divisible by $m_i$ and are not left divisible by $m_t$ with $t<i$. Then the sum of all $y_i(z)$ 
is equal to $P(z)$. 
The divisibility relations on the set of all stamps leads to a system of $N$ equations of the form 
$$
y_i = f_i(z, y_1, \dots, y_N)
$$
for each $y_i = y_i(z)$, where $f_i$ is a polynomial and $N$ is the number of all stamps. Note that the degree $d_i$ of the polynomial $f_i$ does not exceed the maximal arity of generators of the operad $P$. Then the elimination of the variables leads to an algebraic equation of degree at most $d = d_1^2 \dots d_N^2$ on $P(z)$. 

A couple of similar algorithms which in some cases reduce either the number or the degrees of the equations are also discussed in~\cite{kp}.

Knowing an algebraic equation for $P(z)$, one can evaluate the asymptotics for the coefficients $\dim P_n$ 
by well-known methods~\cite[Theorem~D]{flj-slg-fn}.

Let us consider the case of shuffle operad.  A set $M$ of
shuffle monomials is called {\em shuffle regular} if for each $m\in M$
the set $M$ also contains all shuffle monomials which are obtained from $m$ by  
permutations of the labels on the leaves. Moreover, $M$ is called {\em symmetric regular} if for each other planar representation of the labelled tree $T$ defined by  $m$, all shuffle monomials that are  obtained from $T$ by  
permutations of the leaf labels also belong to $M$.


\begin{theorem}[\cite{kp}, Cor.~{0.1.4} and Th.~3.3.2]
\label{th:main_sym_intro} Let $\p$ be symmetric
operad with a finite Gr\"obner basis, and let $M$ be the  set of leading terms of the elements of the Groebner basis. 

(a) If the set $M$ is shuffle regular, then $\p(z)$ is a differential algebraic, that is, it satisfies a non-trivial algebraic differential equations with polynomial coefficients. 

(b) If the set $M$ is symmetric regular, then $\p(z)$ is algebraic. 
\end{theorem}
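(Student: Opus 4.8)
The plan is to reduce both statements to a pattern-avoidance enumeration for a monomial operad and then to extract the two conclusions from two genuinely different combinatorial decompositions. Since $\p$ has a finite Gr\"obner basis, its generating series coincides with that of the monomial shuffle operad $\widetilde\p$ whose relations are exactly the leading terms $M$, as recalled at the beginning of this section; hence it suffices to analyze $\widetilde\p$, where $\dim\widetilde\p_n$ equals the number of shuffle monomials of arity $n$ that are divisible by no element of $M$. The first observation I would use is that, under either hypothesis, $M$ is closed under all permutations of the leaf labels, so divisibility by $M$ depends only on the underlying planar tree shape of a shuffle monomial and not on its particular labelling. This is what makes the enumeration tractable in terms of the non-symmetric (planar) structure.

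For part (b), I would argue that symmetric regularity makes $\widetilde\p$ the symmetrization of a non-symmetric monomial operad $\widetilde P$. Indeed, closure of $M$ under re-planarization together with closure under leaf permutations means the monomial relations are genuinely symmetric and are governed by a set of non-symmetric (planar) patterns, so that $\dim\widetilde\p_n = n!\,\dim\widetilde P_n$. Consequently $\p(z) = E_{\widetilde\p}(z) = G_{\widetilde P}(z) = \widetilde P(z)$, and the last series is algebraic by Theorem~\ref{th-nonsym-intro}, since $\widetilde P$ is a non-symmetric operad with a finite (monomial) Gr\"obner basis. This settles (b) cleanly, without any further analysis of labellings.

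For part (a) the shuffle-regular case is genuinely harder, because $\widetilde\p$ need not be a symmetrization: the shuffle condition singles out, at each internal vertex, the leftmost child as the one carrying the minimal label. I would run the stamp decomposition used for Theorem~\ref{th-nonsym-intro}, classifying the avoiding trees by a finite set of left-divisor monomials $m_1,\dots,m_N$ (finite because the finite Gr\"obner basis bounds the relevant level) and introducing the exponential generating function $y_i(z)$ of the avoiding shuffle monomials left-divisible by $m_i$ but by no $m_t$ with $t<i$. The essential new input is the exponential form of a shuffle composition at a vertex with children of sizes $n_1,\dots,n_k$: since the minimal label must land in the leftmost child, the number of shuffle labellings obeys $\mathrm{sh}(T)=\binom{n-1}{n_1-1,n_2,\dots,n_k}\prod_i \mathrm{sh}(T_i)$, which at the level of generating functions reads
$$
Y'(z) = Y_1'(z)\,Y_2(z)\cdots Y_k(z).
$$
Thus each composition produces a polynomial differential relation in which precisely the distinguished leftmost factor is differentiated, and the finitely many stamps assemble into a finite system of algebraic differential equations $y_i' = g_i(z,y_1,\dots,y_N,y_1',\dots,y_N')$ with polynomial right-hand sides.

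The last step, which I expect to be the main obstacle, is the elimination: from this finite differential-polynomial system I would conclude that $\p(z)=\sum_i y_i(z)$ satisfies a single non-trivial algebraic differential equation. This is the differential-algebra analogue of the variable elimination used in the non-symmetric case, and it holds because the differential field generated over $\KK(z)$ by the finitely many $y_1,\dots,y_N$ has finite transcendence degree, forcing every element — in particular $\sum_i y_i$ — to be differentially algebraic. Making this effective, so that the equation can actually be produced algorithmically as promised in Section~\ref{sec:we}, is where the real work lies: one must control characteristic sets or differential resultants of the system. By contrast, the combinatorial set-up of the $y_i$ and the verification that shuffle regularity reduces divisibility to the planar shape are comparatively routine, and the appearance of $Y_1'$ rather than $Y_1$ in the composition rule is exactly why part (a) yields only a differential-algebraic equation whereas (b), lacking that distinguished differentiated factor, stays purely algebraic.
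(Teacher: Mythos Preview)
Your treatment of part~(a) is essentially the paper's own argument. The stamp decomposition together with the shuffle multinomial identity you wrote, $Y'=Y_1'\,Y_2\cdots Y_k$, is exactly the differentiated form of the integral operator $C(f,g)(z)=\int_0^z f'(w)g(w)\,dw$ that the paper uses; the resulting finite polynomial ODE system and the differential-algebraic elimination step match the sketch following the theorem. So for~(a) there is nothing to correct.

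Part~(b), however, contains a genuine gap: the assertion that symmetric regularity forces $\widetilde\p$ to be the symmetrization of a non-symmetric monomial operad, i.e.\ $\dim\widetilde\p_n=n!\,\dim\widetilde P_n$, is false. The paper's own example $\mathcal N$ (relations $f_1,f_2$ explicitly noted to be symmetric regular) already refutes it: from $\mathcal N(z)=z+z^2+2z^3+\tfrac{19}{4}z^4+\cdots$ one gets $\dim\mathcal N_4=114$, which is not a multiple of $4!=24$. The slip is that closure of $M$ under re-planarization and leaf permutations makes \emph{divisibility} depend only on the abstract tree, but it does \emph{not} make each avoiding planar shape contribute $n!$ shuffle labellings; the number of shuffle labellings of a fixed planar tree is the product of the multinomials $\binom{n-1}{n_1-1,n_2,\dots,n_k}$ over its vertices, not $n!$. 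Hence the reduction to Theorem~\ref{th-nonsym-intro} via symmetrization does not go through.

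The mechanism the paper actually uses for~(b) is different and stays inside the integral system from~(a): symmetric regularity forces the stamps to come in orbits under child permutations with identical generating functions, so in the system $y_i=C_i(z,y_1,\dots,y_N)$ the terms $C(y_i,y_j)$ and $C(y_j,y_i)$ always occur together. Integration by parts, $C(f,g)+C(g,f)=fg$, then removes every integral and leaves a finite system of polynomial (algebraic) equations among the $y_i$; elimination gives an algebraic equation for $\p(z)=\sum_i y_i$. In the example this is exactly how the equation $2y_1=z^2+4zy_1+3y_1^2$ is obtained. If you want to repair your argument for~(b), this pairing-and-cancellation is the missing idea.
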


The differential equation in the part~(a) is obtained by a similar way as the algebraic equation in Theorem~\ref{th-nonsym-intro}. Similar arguments lead to a system of equations of the form 
$$
y_i = C_i(z, y_1, \dots, y_N),
$$
where $C_i$ is a linear combination of the multiple compositions of the  operation $C(f,g)(z) := \int_0^z f'(w) g(w) \, dw$ where $f$ and $g$ are formal power series. The terms in the equations encode the overlapping of stamps as it is clear from the example considered below. A multiple differentiation then leads to a system of algebraic ordinary differential equations on $y_1, \dots, y_N$. The elimination of variables gives a single differential equation on $\p(z)$.

Note that in many examples, the function $\p(z)$  happens to be essentially simpler than one may expect. 

Consider an example~\cite[Example 3.5.3]{kp}. Let $N$ be an operad with one binary operation (multiplication) 
subject to the identity $$
   [x_1,x_2][x_3,x_4] = 0,
$$
where  $[a,b] = ab-ba$ (this identity holds in the ring of the upper-triangular  matrices of order two over a non-associative commutative ring). 
The corresponding shuffle operad ${\mathcal {N}}$ is
generated by two binary generators, namely,
 the multiplication $\mu$ and
$\alpha: (x_1,x_2) \mapsto [x_1,x_2]$. Then the above identity is
equivalent to the pair of shuffle regular monomial identities
$$
f_1 = \mu(\alpha(\textrm{-},\textrm{-}), \alpha(\textrm{-},\textrm{-}) )=0
\quad \text{ and } \quad
f_2 = \alpha(\alpha(\textrm{-},\textrm{-}), \alpha(\textrm{-},\textrm{-}) )=0.
$$
Therefore, the ideal of relations of the shuffle operad ${\mathcal {N}}$
is generated by the following six shuffle monomials obtained from $f_1$ and
$f_2$ by substituting all shuffle compositions of four variables
(which we denote for simplicity by 1,2,3,4):
$$
\begin{array}{ll}
m_1 = \mu(\alpha(1,2), \alpha(3,4) ),
 & m_2 = \mu(\alpha(1,3),
\alpha(2,4) ), \\
   m_3 = \mu(\alpha(1,4), \alpha(2,3) ), &
m_4 = \alpha(\alpha(1,2), \alpha(3,4) ), \\
 m_5 = \alpha(\alpha(1,3),
\alpha(2,4) ),
  & m_6 = \alpha(\alpha(1,4), \alpha(2,3) ).\\
\end{array}
$$
The operad ${\mathcal {N}}$ is monomial, so that the monomials $m_1, \dots, m_6$
form Groebner basis for it. 
Let us describe the set $B$ of all stamps of all nonzero monomials in
${\mathcal {N}}$. Since the relations have their leaves at level 2, $B$
includes all monomials of level at most one, that is, the
monomials
$$
B_0 = \Id, B_1 = \mu(\textrm{-},\textrm{-}), B_2 =\alpha(\textrm{-},\textrm{-}).
$$
For the corresponding generating series $y_i = y_i(z)$ with
$i=0,1,2$ we have
$$
\left\{
 \begin{array}{lll} y_0 & = &z, \\
  y_1 &= & C(y_0,y_0) + C(y_1,z)+C(z,y_1) +C(y_2,z) \\
  & & +C(z,y_2) +   C(y_1,y_1)+   C(y_1,y_2)+  C(y_2,y_1), \\
  y_2 & = &C(y_0,y_0) + C(y_1,z)+C(z,y_1) +C(y_2,z) \\
  & & +C(z,y_2) +   C(y_1,y_1)+   C(y_1,y_2)+  C(y_2,y_1).
  \end{array}
 \right.
$$
Here all terms correspond to compositions of stamps, e.g., the term $C(y_1,y_2)$ in the second line denotes `the stamp of the composition $\mu(B_1,B_2)$ is $B_1$' etc. 

We see that $y_1(z) = y_2(z)$  and ${\mathcal {N}}(z) = y(z) =
y_0(z)+y_1(z)+y_2(z) = z+2y_1(z)$. The second equation of the
above system gives, after differentiation, a differential equation on $y_1$ which is equivalent 
to the equation
$$
(y'(z)-1)(2-z-3y(z)) = 4y(z)
$$
on $y(z)$ with the initial condition $y(0) = 0$.
Surprisingly, the solution of this non-linear differential equation is an algebraic function
$$
\begin{array}{l}
{\mathcal {N}}(z) = y(z) = \frac{1}{3}\left( 2-z-2 \sqrt {1-4\,z+{z}^{2}} \right) \\
= z+{z}^{2}+2\,{z}^{3}+{\frac {19}{4}}{z}^{4}+{\frac
{25}{2}}{z}^{5}+{ \frac {281}{8}}{z}^{6}+{\frac
{413}{4}}{z}^{7}+
o(z^{7})
 \end{array}
 $$
 
On the other hand, one can see that the set $\{f_1,f_2\}$ is symmetric regular. This leads to additional symmetries in the above integral equations. Using the identities $y_1=y_2, y_0=z$ and applying the formula $C(f,g)+C(g,f) = fg$, we get the functional equation
$$
2y_1 = z^2 + 4zy_1+3y_1^2
$$
which immediately implies the same formula for ${\mathcal {N}}(z)$.


In fact, in all examples considered in~\cite{kp} the resulting functions $\p(z)$ are holonomic, that is, these functions  satisfy linear differential equations with polynomial coefficients. This means that 
their coefficient asymptotics can be calculated by recent computer algebra tools, see in particular~\cite{kauers}. 
We see that in these cases the ODE system generated by our algorithm imply a linear ODE with polynomial coefficients. 
For more complicated examples, one could 
apply the tools based on the differential algebra elimination theory to obtain a single differential or functional equation for $\p(z)$ in its simpler form.

If the growth of the operad is bounded, our equations give even more. The next theorem explains, in particular, why so simple operad as the operad Com of commutative algebras has non-algebraic exponential generating function $e^z-1$.

\begin{theorem}
\label{th:intro_slow_growth}
Let $\p$ be an operad with a finite Gr\"obner basis and let $M$ be the set of its leading terms.
Suppose that either \\
$\phantom{1111}(i)$ $\p$ is non-symmetric and the numbers $\dim \p(n)$ are bounded by some polynomial
in $n$\\
or\\
$\phantom{1111}(ii)$ $M$ is shuffle regular and the dimensions $\dim \p(n)$  are bounded by an exponential function
$a^n$ for some $a>1$.\\
Then  the ordinary generating series $G_\p (z)$ is rational.
\end{theorem}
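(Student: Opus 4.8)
The plan is to reduce both parts to one principle: pass to the monomial operad $\widetilde\p$ whose relations are $M$ (it has the same ordinary generating series as $\p$, since the Gr\"obner basis is finite), write down the \emph{finite} positive system of generating-function equations produced in the proofs of Theorems~\ref{th-nonsym-intro} and~\ref{th:main_sym_intro}, and then show that the hypothesis on the growth of $\dim\p(n)$ forces this system to be \emph{degenerate}, i.e.\ linear with polynomial coefficients, so that its solution---and hence $G_\p(z)$---is rational. The whole difficulty is to turn the quantitative growth bound into this qualitative structural statement.

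\textbf{Part (i).} Here the stamp system has the form $y_i = f_i(z,y_1,\dots,y_N)$ with each $f_i$ a polynomial with nonnegative coefficients, and $G_\p(z)=\sum_i y_i(z)$. I would first organise the $N$ stamps into the strongly connected components of the dependency graph of the system. If some component carried a genuinely nonlinear dependence (a monomial of degree $\ge 2$ in the variables of that component), then by the standard theory of positive algebraic systems (Drmota--Lalley--Woods; already visible in the Catalan estimate $F(z)=\frac{1-\sqrt{1-4az}}{2a}$ of Corollary~\ref{cor:GS_binary}) the corresponding series would have a square-root singularity and exponential coefficient growth, contradicting the assumed polynomial bound. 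Hence each component is linear, the whole system is triangular--linear over $\mathbb{Q}(z)$, and Cramer's rule gives rational $y_i$, so $G_\p$ is rational. Equivalently, one may quote Theorem~\ref{th-nonsym-intro} to get that $G_\p$ is algebraic and then invoke the arithmetic fact that an algebraic series with nonnegative \emph{integer} coefficients of polynomial growth is rational (integrality is essential: $\sqrt{1-z}$ is algebraic with polynomially bounded coefficients but is not rational).

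\textbf{Part (ii).} Now the stamp system has the shape $y_i=C_i(z,y_1,\dots,y_N)$, built from the integral operation $C(f,g)(z)=\int_0^z f'(w)g(w)\,dw$, and the $y_i$ are \emph{exponential} generating functions. The key observation is how $C$ acts on the corresponding \emph{ordinary} generating functions $\widehat y_i$. A short computation on coefficients gives $C(z,g)\ \longleftrightarrow\ z\,\widehat g$, a mere shift that preserves rationality, whereas the self-referential term $C(g,z)$ corresponds to $z^2\widehat g{}'$ and a nonlinear term $C(g,h)$ mixes two large subtrees; iterating either one already produces factorial growth (for instance $y=z+C(y,z)$ forces $y_n=(n-1)!$). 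Since the hypothesis only allows $\dim\p(n)\le a^n$, every factorial-producing term must be absent from the recursive part of the system, so that the sole surviving cyclic terms are the shifts $C(z,y_j)$. Consequently the ordinary generating functions $\widehat y_i$ satisfy a linear shift system with coefficients in $\mathbb{Q}[z]$, whose solution is rational; summing yields $G_\p(z)$ rational. This is exactly what happens for $\mathrm{Com}$, where the single surviving recursion $y=z+C(z,y)$ gives $\widehat y=z/(1-z)$ even though the exponential series is the transcendental $e^z-1$.

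\textbf{Main obstacle.} I expect the crux to be Part (ii): rigorously proving that the exponential bound eliminates \emph{every} factorial-producing term, not only those on a cycle but also in the acyclic part of the system, and doing so uniformly using shuffle regularity (so that divisibility, hence the stamp combinatorics, depends only on tree shapes). One must show that any occurrence of a $C(g,z)$- or $C(g,h)$-type term feeding a component of unbounded depth would push $\dim\p(n)$ past every exponential $a^n$, and then read off that the contributing subsystem is shift-linear. Controlling this interplay between the EGF-natural operation $C$ and the desired rationality of the OGF---through the identity $C(z,-)\longleftrightarrow z\cdot(-)$---is the technical heart of the argument.
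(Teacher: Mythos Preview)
The paper does not prove Theorem~\ref{th:intro_slow_growth}. It is stated at the end of Section~\ref{sec:we} with no argument, followed only by the remark that asymptotics of coefficients of rational functions are classical; like the neighbouring Theorems~\ref{th-nonsym-intro} and~\ref{th:main_sym_intro} it is evidently being quoted from~\cite{kp}. So there is no proof in this paper to compare your attempt against.

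That said, your plan is sound and your self-diagnosis is accurate. For part~(i), the alternative route you mention is in fact a complete and clean proof: Theorem~\ref{th-nonsym-intro} gives algebraicity of $G_\p$, the coefficients are nonnegative integers, and the polynomial bound forces radius of convergence $\ge 1$; then P\'olya--Carlson (a power series with integer coefficients and radius $1$ is rational or has the unit circle as natural boundary) plus the fact that an algebraic function has only finitely many singularities yields rationality. Your first route via Drmota--Lalley--Woods also works but needs the extra verification that the stamp system is genuinely positive and that a nonlinear strongly connected component really forces exponential growth.

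For part~(ii), your coefficient identities $C(z,g)\leftrightarrow z\,\widehat g$ and $C(g,z)\leftrightarrow z^2\widehat g{}'$ are correct, and the strategy---show that the bound $\dim\p(n)\le a^n$ eliminates every term except the shift $C(z,y_j)$ from the recursive part of the $C$-system, leaving a linear shift system for the $\widehat y_i$---is the right shape. The gap you identify is real: one must prove that \emph{any} surviving $C(y_j,-)$ or nonlinear $C(y_j,y_k)$ term feeding a component of unbounded depth already forces super-exponential growth of $\dim\p(n)$, and this needs a careful induction over the DAG of strongly connected components together with a lower bound of the type ``such a term contributes at least $c\,(n{-}1)!$ to some coefficient''. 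Shuffle regularity is exactly what makes the stamp combinatorics depend only on tree shapes, so your instinct to lean on it here is correct. This step is genuine work, but your proposal locates it precisely.
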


Of course the methods for finding the asymptotics of coefficients of such rational functions are well-known for centuries. In this case the sequence of codimensions is linear recurrent.

\bibliographystyle{ACM-Reference-Format}
\bibliography{biblio_operads} 


\begin{thebibliography}{00}


\ifx \showCODEN    \undefined \def \showCODEN     #1{\unskip}     \fi
\ifx \showDOI      \undefined \def \showDOI       #1{#1}\fi
\ifx \showISBNx    \undefined \def \showISBNx     #1{\unskip}     \fi
\ifx \showISBNxiii \undefined \def \showISBNxiii  #1{\unskip}     \fi
\ifx \showISSN     \undefined \def \showISSN      #1{\unskip}     \fi
\ifx \showLCCN     \undefined \def \showLCCN      #1{\unskip}     \fi
\ifx \shownote     \undefined \def \shownote      #1{#1}          \fi
\ifx \showarticletitle \undefined \def \showarticletitle #1{#1}   \fi
\ifx \showURL      \undefined \def \showURL       {\relax}        \fi
\providecommand\bibfield[2]{#2}
\providecommand\bibinfo[2]{#2}
\providecommand\natexlab[1]{#1}
\providecommand\showeprint[2][]{arXiv:#2}

\bibitem[\protect\citeauthoryear{Anick}{Anick}{1985}]%
        {an2}
\bibfield{author}{\bibinfo{person}{D. Anick}.} \bibinfo{year}{1985}\natexlab{}.
\newblock \showarticletitle{Diophantine equations, Hilbert series, and
  undecidable spaces}.
\newblock \bibinfo{journal}{{\em Ann. Math.\/}}  \bibinfo{volume}{122}
  (\bibinfo{year}{1985}), \bibinfo{pages}{87--112}.
\newblock


\bibitem[\protect\citeauthoryear{Berele}{Berele}{2014}]%
        {berele}
\bibfield{author}{\bibinfo{person}{A. Berele}.}
  \bibinfo{year}{2014}\natexlab{}.
\newblock \showarticletitle{Cocharacter sequences are holonomic}.
\newblock \bibinfo{journal}{{\em Journal of Algebra\/}}  \bibinfo{volume}{412}
  (\bibinfo{year}{2014}), \bibinfo{pages}{150--154}.
\newblock


\bibitem[\protect\citeauthoryear{Bremner and Dotsenko}{Bremner and
  Dotsenko}{2016}]%
        {db}
\bibfield{author}{\bibinfo{person}{M. Bremner} {and} \bibinfo{person}{V.
  Dotsenko}.} \bibinfo{year}{2016}\natexlab{}.
\newblock \bibinfo{booktitle}{{\em Algebraic operads: an algorithmic
  companion}}.
\newblock \bibinfo{publisher}{CRC Press}.
\newblock


\bibitem[\protect\citeauthoryear{Dotsenko}{Dotsenko}{2012}]%
        {dots2012}
\bibfield{author}{\bibinfo{person}{V. Dotsenko}.}
  \bibinfo{year}{2012}\natexlab{}.
\newblock \showarticletitle{Pattern avoidance in labelled trees}.
\newblock \bibinfo{journal}{{\em S\'eminaire Lotharingien de Combinatoire\/}}
  \bibinfo{volume}{67} (\bibinfo{year}{2012}).
\newblock
\newblock
\shownote{B67b.}


\bibitem[\protect\citeauthoryear{Dotsenko}{Dotsenko}{2016}]%
        {dots2016}
\bibfield{author}{\bibinfo{person}{V. Dotsenko}.}
  \bibinfo{year}{2016}\natexlab{}.
\newblock \showarticletitle{A Quillen adjunction between algebras and operads,
  Koszul duality, and the Lagrange inversion formula}.
\newblock  (\bibinfo{year}{2016}).
\newblock
\newblock
\shownote{arXiv:1606.08222.}


\bibitem[\protect\citeauthoryear{Dotsenko and Khoroshkin}{Dotsenko and
  Khoroshkin}{2010}]%
        {dk}
\bibfield{author}{\bibinfo{person}{V. Dotsenko} {and} \bibinfo{person}{A.
  Khoroshkin}.} \bibinfo{year}{2010}\natexlab{}.
\newblock \showarticletitle{Gr\"obner bases for operads}.
\newblock \bibinfo{journal}{{\em Duke Math. J.\/}} \bibinfo{volume}{153},
  \bibinfo{number}{2} (\bibinfo{year}{2010}), \bibinfo{pages}{363--396}.
\newblock


\bibitem[\protect\citeauthoryear{Dotsenko and Khoroshkin}{Dotsenko and
  Khoroshkin}{2013}]%
        {dk-pattern}
\bibfield{author}{\bibinfo{person}{V. Dotsenko} {and} \bibinfo{person}{A.
  Khoroshkin}.} \bibinfo{year}{2013}\natexlab{}.
\newblock \showarticletitle{Shuffle algebras, homology, and consecutive pattern
  avoidance}.
\newblock \bibinfo{journal}{{\em Algebra and Number Theory\/}}
  \bibinfo{volume}{7} (\bibinfo{year}{2013}), \bibinfo{pages}{673--700}.
\newblock


\bibitem[\protect\citeauthoryear{Dotsenko and Vejdemo-Johansson}{Dotsenko and
  Vejdemo-Johansson}{2010}]%
        {dv}
\bibfield{author}{\bibinfo{person}{V. Dotsenko} {and} \bibinfo{person}{M.
  Vejdemo-Johansson}.} \bibinfo{year}{2010}\natexlab{}.
\newblock \showarticletitle{Operadic Groebner bases: an implementation}. In
  \bibinfo{booktitle}{{\em International Congress on Mathematical Software}}.
  \bibinfo{publisher}{Springer}, \bibinfo{pages}{249--252}.
\newblock


\bibitem[\protect\citeauthoryear{Dzhumadil'daev}{Dzhumadil'daev}{2009}]%
        {dzh}
\bibfield{author}{\bibinfo{person}{A.~S. Dzhumadil'daev}.}
  \bibinfo{year}{2009}\natexlab{}.
\newblock \showarticletitle{Algebras with skew-symmetric identities of degree
  3}.
\newblock \bibinfo{journal}{{\em Journal of Mathematical Sciences\/}}
  \bibinfo{volume}{161}, \bibinfo{number}{1} (\bibinfo{year}{2009}),
  \bibinfo{pages}{11--30}.
\newblock


\bibitem[\protect\citeauthoryear{Flajolet}{Flajolet}{1987}]%
        {flj-slg-fn}
\bibfield{author}{\bibinfo{person}{Ph. Flajolet}.}
  \bibinfo{year}{1987}\natexlab{}.
\newblock \showarticletitle{Analytic models and ambiguity of context-free
  languages}.
\newblock \bibinfo{journal}{{\em Theoret. Comput. Sci.\/}}
  \bibinfo{volume}{49}, \bibinfo{number}{2--3} (\bibinfo{year}{1987}),
  \bibinfo{pages}{283--309}.
\newblock


\bibitem[\protect\citeauthoryear{Giambruno and Zaicev}{Giambruno and
  Zaicev}{2005}]%
        {giza}
\bibfield{author}{\bibinfo{person}{A. Giambruno} {and} \bibinfo{person}{M.
  Zaicev}.} \bibinfo{year}{2005}\natexlab{}.
\newblock \bibinfo{booktitle}{{\em Polynomial identities and asymptotic
  methods}}. \bibinfo{series}{Mathematical Surveys and Monographs},
  Vol.~\bibinfo{volume}{122}.
\newblock \bibinfo{publisher}{AMS}, \bibinfo{address}{Providence, RI}.
\newblock


\bibitem[\protect\citeauthoryear{Golod and Shafarevich}{Golod and
  Shafarevich}{1964}]%
        {gs}
\bibfield{author}{\bibinfo{person}{E.~S. Golod} {and} \bibinfo{person}{I.~R.
  Shafarevich}.} \bibinfo{year}{1964}\natexlab{}.
\newblock \showarticletitle{On the class field tower}.
\newblock \bibinfo{journal}{{\em Izv. Akad. Nauk SSSR, Ser. Mat.\/}}
  \bibinfo{volume}{28} (\bibinfo{year}{1964}), \bibinfo{pages}{261--272}.
\newblock
\newblock
\shownote{[in Russian].}


\bibitem[\protect\citeauthoryear{Iyudu and Shkarin}{Iyudu and Shkarin}{2017}]%
        {iyudu2017automaton}
\bibfield{author}{\bibinfo{person}{Natalia Iyudu} {and}
  \bibinfo{person}{Stanislav Shkarin}.} \bibinfo{year}{2017}\natexlab{}.
\newblock \showarticletitle{Automaton algebras and intermediate growth}.
\newblock \bibinfo{journal}{{\em arXiv preprint arXiv:1705.09972\/}}
  (\bibinfo{year}{2017}).
\newblock


\bibitem[\protect\citeauthoryear{Kang, Lee, Lee, and Park}{Kang
  et~al\mbox{.}}{2007}]%
        {nc_f4}
\bibfield{author}{\bibinfo{person}{S.~J. Kang}, \bibinfo{person}{D.~I. Lee},
  \bibinfo{person}{K.~H. Lee}, {and} \bibinfo{person}{H. Park}.}
  \bibinfo{year}{2007}\natexlab{}.
\newblock \showarticletitle{Linear algebraic approach to Groebner-Shirshov
  basis theory}.
\newblock \bibinfo{journal}{{\em Journal of Algebra\/}}  \bibinfo{volume}{313}
  (\bibinfo{year}{2007}), \bibinfo{pages}{988--1004}.
\newblock


\bibitem[\protect\citeauthoryear{Kauers}{Kauers}{2011}]%
        {kauers}
\bibfield{author}{\bibinfo{person}{M. Kauers}.}
  \bibinfo{year}{2011}\natexlab{}.
\newblock \bibinfo{booktitle}{{\em A Mathematica package for computing
  asymptotic expansions of solutions of P-finite recurrence equations}}.
\newblock \bibinfo{type}{{T}echnical {R}eport}. \bibinfo{institution}{Johannes
  Kepler Universit\"at Linz}.
\newblock
\newblock
\shownote{Technical Report RISC 11-04.}


\bibitem[\protect\citeauthoryear{Khoroshkin and Piontkovski}{Khoroshkin and
  Piontkovski}{2015}]%
        {kp}
\bibfield{author}{\bibinfo{person}{A. Khoroshkin} {and} \bibinfo{person}{D.
  Piontkovski}.} \bibinfo{year}{2015}\natexlab{}.
\newblock \showarticletitle{On generating series of finitely presented
  operads}.
\newblock \bibinfo{journal}{{\em Journal of Algebra\/}}  \bibinfo{volume}{426}
  (\bibinfo{year}{2015}), \bibinfo{pages}{377--429}.
\newblock


\bibitem[\protect\citeauthoryear{Loday and Vallette}{Loday and
  Vallette}{2012}]%
        {Loday_Valet}
\bibfield{author}{\bibinfo{person}{J.-L. Loday} {and} \bibinfo{person}{B.
  Vallette}.} \bibinfo{year}{2012}\natexlab{}.
\newblock \bibinfo{booktitle}{{\em Algebraic Operads}}.
\newblock \bibinfo{publisher}{Springer}, \bibinfo{address}{Heidelberg}.
\newblock
\newblock
\shownote{Grundlehren Math. Wiss. 346.}


\bibitem[\protect\citeauthoryear{Markl, Shnider, and .Stasheff}{Markl
  et~al\mbox{.}}{2012}]%
        {oper}
\bibfield{author}{\bibinfo{person}{M. Markl}, \bibinfo{person}{S. Shnider},
  {and} \bibinfo{person}{J .Stasheff}.} \bibinfo{year}{2012}\natexlab{}.
\newblock \bibinfo{booktitle}{{\em Operads in algebra, topology and physics}}.
  \bibinfo{series}{Mathematical Surveys and Monographs},
  Vol.~\bibinfo{volume}{96}.
\newblock \bibinfo{publisher}{AMS}, \bibinfo{address}{Providence, RI}.
\newblock


\bibitem[\protect\citeauthoryear{Piontkovski}{Piontkovski}{1993}]%
        {me}
\bibfield{author}{\bibinfo{person}{D. Piontkovski}.}
  \bibinfo{year}{1993}\natexlab{}.
\newblock \showarticletitle{On growth of graded algebras with small number of
  defining relations}.
\newblock \bibinfo{journal}{{\em Comm. Moscow Math. Soc.\/}}
  \bibinfo{volume}{48}, \bibinfo{number}{3} (\bibinfo{year}{1993}),
  \bibinfo{pages}{189--190}.
\newblock


\bibitem[\protect\citeauthoryear{Piontkovski}{Piontkovski}{2000}]%
        {gr}
\bibfield{author}{\bibinfo{person}{D. Piontkovski}.}
  \bibinfo{year}{2000}\natexlab{}.
\newblock \showarticletitle{Hilbert series and relations in algebras}.
\newblock \bibinfo{journal}{{\em Izvestia: Math.\/}} \bibinfo{volume}{64},
  \bibinfo{number}{6} (\bibinfo{year}{2000}), \bibinfo{pages}{1297--1311}.
\newblock


\bibitem[\protect\citeauthoryear{Piontkovski}{Piontkovski}{2009}]%
        {kurosh}
\bibfield{author}{\bibinfo{person}{D. Piontkovski}.}
  \bibinfo{year}{2009}\natexlab{}.
\newblock \showarticletitle{On the Kurosh problem in varieties of algebras}.
\newblock \bibinfo{journal}{{\em Journal of Mathematical Sciences\/}}
  \bibinfo{volume}{163}, \bibinfo{number}{6} (\bibinfo{year}{2009}),
  \bibinfo{pages}{743--750}.
\newblock


\bibitem[\protect\citeauthoryear{Zaicev}{Zaicev}{2014}]%
        {zaitsev_no}
\bibfield{author}{\bibinfo{person}{M. Zaicev}.}
  \bibinfo{year}{2014}\natexlab{}.
\newblock \showarticletitle{On existence of PI-exponents of codimension
  growth}.
\newblock \bibinfo{journal}{{\em Electron. Res. Announc. Math. Sci\/}}
  \bibinfo{volume}{21} (\bibinfo{year}{2014}), \bibinfo{pages}{113--119}.
\newblock


\bibitem[\protect\citeauthoryear{Zinbiel}{Zinbiel}{2012}]%
        {zin}
\bibfield{author}{\bibinfo{person}{G.~W. Zinbiel}.}
  \bibinfo{year}{2012}\natexlab{}.
\newblock \showarticletitle{Encyclopedia of types of algebras 2010}. In
  \bibinfo{booktitle}{{\em Proc.Int. Conf.}} {\em (\bibinfo{series}{Nankai
  Series in Pure, Applied Mathematics and Theoretical Physics})},
  Vol.~\bibinfo{volume}{9}. \bibinfo{publisher}{World Scientific},
  \bibinfo{address}{Singapore}, \bibinfo{pages}{217--298}.
\newblock


\end{thebibliography}

\balance

\end{document}